\newtheorem{theorem}{{\bf Theorem}}[section]
\newtheorem{thm}{{\bf Theorem}}[section]
\newtheorem{lemma}[thm]{\bf Lemma}
\newtheorem{corollary}[thm]{\bf Corollary}
\newtheorem{proposition}[thm]{\bf Proposition}
\newtheorem{preproof}{{\bf Proof.}}
\newenvironment{proof}[1]{\begin{preproof}{\rm
               #1}\hfill{$\rule{2mm}{2mm}$}}{\end{preproof}}
\begin{document}
\title{\Large {\bf Total dominator chromatic number of Kneser graphs}}

\author{
{ Parvin Jalilolghadr\thanks{p$\_$jalilolghadr@yahoo.com}}   \\
[1mm]
{\it \small Department of  Mathematics,  University of Mohaghegh Ardabili, Ardabil, Iran} \\\\
{ Ali Behtoei\thanks{Corresponding author, a.behtoei@sci.ikiu.ac.ir}}\\
{\it \small Department of Pure Mathematics, Faculty of Science, Imam Khomeini International University,} \\
 { \it \small Qazvin, Iran, PO Box: 34148 - 96818.}  
}
\date{}
\maketitle
\vspace*{-5mm}
\begin{abstract}
Decomposition into special substructures inheriting significant properties is an important method for the investigation of some mathematical structures.
A \emph{total dominator coloring} (briefly, a TDC)  of a graph $G$ is a proper coloring (i.e. a partition of the vertex set $V(G)$ into independent subsets named color classes)  in which each vertex of the graph is adjacent to all of vertices of some
color class. The \emph{total dominator chromatic number} $\chi_{td}(G)$
of $G$ is the minimum number of color classes in a TDC of $G$.
In this paper among some other results and by using the existance of  Steiner triple systems, we determine the total dominator chromatic number of the Kneser graph $KG(n,2)$ for each $n\geq 5$. 
\end{abstract}

{\bf Key words:}  Kneser graph, Total dominator coloring, Steiner triple system, Total domination.
\\
{\bf 2010 Mathematics Subject Classification:} 05C15, 05C69.


\section{Introduction}

Decomposition into special substructures inheriting significant properties is an important method for the investigation of some mathematical structures,  
 Let $G=(V,E) $ be a graph with the \emph{vertex set} $V$ of \emph{order}
$n(G)$ and the \emph{edge set} $E$ of \emph{size} $m(G)$. The
\emph{open neighborhood} and the \emph{closed neighborhood} of a
vertex $v\in V$ are $N_{G}(v)=\{u\in V\ |\ uv\in E\}$ and
$N_{G}[v]=N_{G}(v)\cup \{v\}$, respectively. The \emph{degree} of a
vertex $v$ is also $deg_G(v)=|N_{G}(v)|$. An isolated vertex is a vertex with degree zero.
A \emph{proper coloring} of a graph $G =(V,E)$ 
 is a function from
the vertices of the graph to a set of colors such that any two
adjacent vertices receive different colors. The \emph{chromatic number}
$\chi (G)$ of $G$ is the minimum number of colors needed in a proper
coloring of a graph. In a proper coloring of a graph, a \emph{color class} of the
coloring is a set consisting of all those vertices with the same
color. If $f$ is a proper coloring of $G$ with the coloring classes
$V_1,~V_2,~ \ldots,~V_{\ell}$ such that every vertex in $V_i$ has
color $i$, we write simply $f=(V_1,V_2,\ldots,V_{\ell})$.
A \emph{total dominating set}, briefly $TDS$, $S$ of a graph $G$ \cite{HeYe13} is a subset
of the vertices in $G$ such that for each vertex $v$, $N_G(v)\cap
S\neq \emptyset$. The \emph{total domination number $\gamma_t(G)$} of $G$ is the minimum cardinality of a $TDS$ of $G$. 
Motivated by the relation between coloring and domination, the notion of total dominator coloring was introduced in \cite{Kaz2015}. Also the reader can consult \cite{Australasian2015,Hen2015,Kaz2016}  for more information.
A \emph {total dominator coloring}, briefly TDC, of a graph $G$ is a proper coloring of $G$ in
which each vertex of the graph is adjacent to every vertex of some
color class. The \emph {total dominator chromatic number} $\chi_{td}(G)$
of $G$ is the minimum number of color classes in a TDC of $G$.
A nice relation between these graphical invariants is provided in \cite{Hen2015}  as follows.
\begin{proposition} \label{bound}
\cite{Hen2015} For each  isolated free graph $G$ we have
\begin{eqnarray*}
\max\{\chi(G),\gamma_t(G)\} \leq \chi_{td}(G)\leq \gamma_t(G)+\chi(G).
\end{eqnarray*}
\end{proposition}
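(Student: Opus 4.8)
The plan is to establish the two inequalities separately, since the lower bound $\max\{\chi(G),\gamma_t(G)\}\leq \chi_{td}(G)$ really consists of two distinct claims, namely $\chi(G)\leq\chi_{td}(G)$ and $\gamma_t(G)\leq\chi_{td}(G)$, while the upper bound is obtained by an explicit construction of a TDC with the right number of classes.

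For the lower bound, the inequality $\chi(G)\leq\chi_{td}(G)$ is immediate from the definitions: every TDC is in particular a proper coloring, so the number of its color classes is at least the chromatic number. For $\gamma_t(G)\leq\chi_{td}(G)$, I would start from an optimal TDC with color classes $V_1,\dots,V_\ell$, where $\ell=\chi_{td}(G)$, and choose one representative vertex from each class to form a set $S$ with $|S|\leq\ell$. Since each vertex $v$ is adjacent to \emph{all} vertices of some class $V_i$, in particular it is adjacent to the representative of $V_i$ lying in $S$; hence $S$ meets the open neighborhood of every vertex and is therefore a total dominating set, giving $\gamma_t(G)\leq|S|\leq\ell=\chi_{td}(G)$.

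For the upper bound I would produce a TDC with at most $\gamma_t(G)+\chi(G)$ classes. Fix a minimum total dominating set $S=\{s_1,\dots,s_{\gamma_t(G)}\}$ and a proper coloring of $G$ with independent color classes $W_1,\dots,W_{\chi(G)}$. Define a new coloring by assigning each $s_i$ its own singleton class $\{s_i\}$ and keeping the classes $W_1\setminus S,\dots,W_{\chi(G)}\setminus S$ for the remaining vertices. This is a partition of $V(G)$ into independent sets (subsets of independent sets, together with singletons), hence a proper coloring, and it uses at most $\gamma_t(G)+\chi(G)$ nonempty classes. It is a TDC because, $S$ being a total dominating set, every vertex $v$ has a neighbor $s_i\in S$, and then $v$ is adjacent to every vertex of the singleton class $\{s_i\}$; thus $\chi_{td}(G)\leq\gamma_t(G)+\chi(G)$.

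Both ingredients are elementary, so the main point to handle carefully is the verification that the constructed coloring in the upper bound is simultaneously proper and totally dominating: properness follows since we only refine an existing proper coloring, while the total domination property is precisely where the hypothesis that $S$ is a \emph{total} (not merely ordinary) dominating set is used, guaranteeing that the dominating vertex $s_i$ is a neighbor of $v$ rather than $v$ itself. This is also where the isolated-free hypothesis enters, since it ensures that a total dominating set exists and that every vertex indeed has a neighbor available in $S$.
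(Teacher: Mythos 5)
Your proof is correct. The paper itself gives no proof of this proposition---it is quoted from \cite{Hen2015}---and your two-part argument (a set of representatives, one per color class of an optimal TDC, is a total dominating set since every vertex totally dominates some nonempty class; conversely, splitting a minimum total dominating set $S$ into singleton classes and keeping the classes $W_j\setminus S$ of a proper $\chi(G)$-coloring yields a TDC with at most $\gamma_t(G)+\chi(G)$ classes) is exactly the standard proof from that reference, so there is nothing to add.
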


For positive integers $n,k$ with the condition $k\leq\frac{n}{2}$ let $V={[n] \choose k}$ be the set of all $k-$subsets of the $n$-set $\{1,2,\ldots, n\}$. The Kneser graph  $KG(n,k)$, is a graph with the vertex set $V$ such that two vertices are adjacent in it if and only if the corresponding subsets are disjoint. For example $KG(2k,k)$ is a matching and $KG(5,2)$ is the Petersen graph. 
Note that each vertex in $KG(n,2)$ which is a $2-$subset of the set $\{1,2,\ldots, n\}$ corresponds to an edge in the complete graph $K_n$ with vertex set $\{1,2,\ldots,n\}$. Hence, two vertices of $KG(n,2)$ are non-adjacent if and only if the corresponding edges in $K_n$ are adjacent. 
Also, If $A$ is an independent set of vertices in $KG(n,2)$, then either all vertices in $A$ have a common symbol , say $a$, or $A=\{ab,ac,bc\}$, for some $a,b,c\in \{1,2,\ldots n\}$. In other word, an independent set of vertices in $KG(n,2)$ corresponds to a star subgraph with center $a$ (and in this case $"a"$ is considered to be a central symbol) or a triangle subgraph in $K_n$.  From now on, we call the independent set (color class) in $KG(n,2)$ of the first form starlike with center $a$ and the second form triangular. Moreover, for convenient we denote the vertex $\{a,b\}$ with $ab$. When a starlike color class consists of only one vertex, say $ij$, then we choose in arbitray exactly one of these two symbols $i$ or $j$ as the central symbol of this color class. Note that two different starlike color classes may have the same central symbols. Since every proper coloring is a partition of vertices into independent sets of vertices, we can consider every proper coloring of $KG(n,2)$ as an edge decomposition of the complete graph $K_n$ into star and triangle subgraphs.
The total domination number of  the Kneser graphs $KG(n,2)$  is completely determned in \cite{domination} as follows.
\begin{proposition}\label{gammat}
If $n\geq 4$, then
\begin{equation*}
\gamma_t(KG(n,2))= \left\{
\begin{array}{ll}
\! 6 &  ~n=4,\\
\! 4 & ~ n=5,\\
\! 3  &  ~ o.w.
\end{array}
\right.
\end{equation*}
\end{proposition}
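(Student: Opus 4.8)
The plan is to translate total domination in $KG(n,2)$ into a covering condition on the complete graph $K_n$, using the dictionary from the introduction: a vertex $\{x,y\}$ is dominated by a set $S$ precisely when some member of $S$ is a $2$-subset disjoint from $\{x,y\}$. Thus a set $S$ of $2$-subsets is a TDS if and only if (i) every $2$-subset of $\{1,\ldots,n\}$ is disjoint from at least one member of $S$, and (ii) the subgraph induced on $S$ has no isolated vertex, i.e. each member of $S$ is itself disjoint from some other member of $S$. Condition (ii) is exactly what makes this \emph{total} domination rather than ordinary domination, and I expect it to be responsible for the two exceptional values.

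For the generic case $n\geq 6$ I would first establish the upper bound $\gamma_t\leq 3$ by exhibiting a TDS of size $3$. Choosing three pairwise disjoint pairs, say $\{1,2\},\{3,4\},\{5,6\}$ (which is possible exactly because $n\geq 6$), condition (ii) holds since these three pairs are mutually adjacent; and condition (i) holds because any $2$-subset has only two elements and so can meet at most two of the three disjoint pairs, hence is disjoint from at least one of them. For the matching lower bound $\gamma_t\geq 3$, valid for all $n\geq 4$, I would rule out a TDS of size $2$: by (ii) such a set would have to be a single adjacent pair $\{a,b\},\{c,d\}$ with $a,b,c,d$ distinct, but then the $2$-subset $\{a,c\}$ meets both members and violates (i). Together these settle $\gamma_t(KG(n,2))=3$ for every $n\geq 6$.

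It remains to treat the two small cases, which I regard as the main obstacle precisely because condition (ii) bites differently there. For $n=4$ the graph $KG(4,2)$ is a perfect matching $3K_2$, each $2$-subset being adjacent only to its complement; in every edge component the unique neighbor of each endpoint is forced into any TDS, so all six vertices must be chosen and $\gamma_t=6$. For $n=5$, the Petersen graph, I would obtain the lower bound $\gamma_t\geq 4$ from a counting argument: since $KG(5,2)$ is $3$-regular on $10$ vertices, any three vertices dominate at most $3\cdot 3=9<10$ vertices, so no TDS of size $3$ can exist. A matching upper bound is then supplied by an explicit $4$-set such as $\{1,2\},\{3,4\},\{3,5\},\{4,5\}$, whose neighborhoods are the $2$-subsets of $\{3,4,5\}$, of $\{1,2,5\}$, of $\{1,2,4\}$, and of $\{1,2,3\}$ respectively; one checks directly that their union is all ten vertices and that each of the four chosen vertices is adjacent to $\{1,2\}$ inside the set, verifying (i) and (ii). This yields $\gamma_t(KG(5,2))=4$ and completes the proof.
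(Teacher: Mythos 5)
Your proof is correct, but note that the paper does not prove this proposition at all: it is imported as a known result, cited to Ivanco and Zelinka's paper on domination in Kneser graphs, so there is no internal argument to match yours against. What you have done differently is supply a short, self-contained, elementary proof specialized to $k=2$: the translation of total domination into the condition that every $2$-subset of $\{1,\ldots,n\}$ be disjoint from some member of $S$ (your condition (i), which in fact already subsumes your condition (ii), since the members of $S$ are themselves $2$-subsets and no vertex is adjacent to itself), the three-disjoint-pairs construction for $n\geq 6$, the forced-neighbor argument in the matching $3K_2$ for $n=4$, and the regularity bound $\gamma_t\geq\lceil 10/3\rceil=4$ for the Petersen graph together with an explicit $4$-element TDS. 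All of these steps check out; in particular the union of the open neighborhoods of $\{1,2\},\{3,4\},\{3,5\},\{4,5\}$ is indeed all ten vertices of $KG(5,2)$. One microscopic slip: your closing remark that ``each of the four chosen vertices is adjacent to $\{1,2\}$ inside the set'' is false for $\{1,2\}$ itself, which is of course not adjacent to itself; but this is harmless, since your coverage computation already shows $\{1,2\}\in N(\{3,4\})$, so every member of $S$ is dominated. Compared with citing the literature, your route buys the reader a complete verification in a few lines at the cost of generality: the cited source treats domination parameters of Kneser graphs $KG(n,k)$ more broadly, while your argument is tailored to $k=2$, which is all this paper needs.
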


Also, it is well known that $\chi(KG(n,k))=n-2k+2$ and hence $\chi(KG(n,2))=n-2$, see \cite{lovasz}. 
In recent years many different types of colorings for the Kneser graphs has been studied by several researchers.
For instance, the $b-$chromatic number of some Kneser graphs is investigated in \cite{omoomi} and the locating chromatic number of  Kneser graphs is studied in \cite{AB} in which the locating chromatic number of  $KG(n,2)$ is completely determined.
In this paper, we determine the total dominator coloring of the  Kneser graph $KG(n,2)$. 


\section{Main results}

Since $\chi(KG(n,2))=n-2$,  two Propositions \ref{bound} and  \ref{gammat} imply the following result. 

\begin{corollary}  \label{n+1} 
For each integer $n\geq 6$, we have 
\begin{eqnarray*}
n-2\leq \chi_{_{td}}(KG(n,2))\leq n+1.
\end{eqnarray*}
\end{corollary}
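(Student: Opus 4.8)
The plan is to obtain both bounds by feeding the explicitly known values of $\chi(KG(n,2))$ and $\gamma_t(KG(n,2))$ into the general inequality of Proposition~\ref{bound}. Before applying that proposition I would first check its hypothesis that the graph is isolated-free: for $n\geq 6$ every $2$-subset of $\{1,2,\ldots,n\}$ has a disjoint $2$-subset, so $KG(n,2)$ has minimum degree at least $1$ and contains no isolated vertex. Hence Proposition~\ref{bound} is applicable.

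Next I would pin down the two invariants in the regime $n\geq 6$. Since $n\geq 6$ falls into the ``otherwise'' case of Proposition~\ref{gammat}, we have $\gamma_t(KG(n,2))=3$, and the cited value of the chromatic number gives $\chi(KG(n,2))=n-2$. Substituting these into the upper bound $\chi_{td}(G)\leq \gamma_t(G)+\chi(G)$ of Proposition~\ref{bound} immediately yields $\chi_{td}(KG(n,2))\leq 3+(n-2)=n+1$, which is exactly the claimed upper bound.

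For the lower bound I would evaluate $\max\{\chi(KG(n,2)),\gamma_t(KG(n,2))\}=\max\{n-2,3\}$. The only point requiring (trivial) care is to verify which term is larger: for $n\geq 6$ we have $n-2\geq 4>3$, so the maximum equals $n-2$, and Proposition~\ref{bound} gives $\chi_{td}(KG(n,2))\geq n-2$. Combining the two estimates completes the proof. There is essentially no genuine obstacle here, since the content of the corollary lies entirely in the two cited propositions; the restriction to $n\geq 6$ is precisely what is needed both to land in the constant case $\gamma_t=3$ and to ensure that the chromatic number, rather than the total domination number, governs the lower bound in $\max\{n-2,3\}$.
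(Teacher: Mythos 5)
Your proposal is correct and matches the paper's argument exactly: the corollary is obtained by substituting $\chi(KG(n,2))=n-2$ and, for $n\geq 6$, $\gamma_t(KG(n,2))=3$ (the ``otherwise'' case of Proposition~\ref{gammat}) into the two-sided bound of Proposition~\ref{bound}. Your additional checks (isolated-freeness and evaluating $\max\{n-2,3\}$) are sound but routine, and do not constitute a different route.
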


\begin{lemma}\label{ntdc}
Let $k\geq n-2$ and $f=(V_1, V_2,\ldots, V_k)$ be a proper coloring of the Kneser graph $KG(n,2)$. If there exists a symbol $i\in\{1,2,...,n\}$ such that ''$i$" appears in at least $k-1$ color classes, then $f$ is not a TDC.
\end{lemma}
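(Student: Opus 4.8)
The plan is to recast the total-domination requirement as a condition on which \emph{symbols} occur inside a color class. First I would record the elementary observation that a color class $V_j$ totally dominates a vertex $\{a,b\}$ exactly when every vertex of $V_j$ is disjoint from $\{a,b\}$; equivalently, when neither of the symbols $a$ nor $b$ appears in $V_j$. In particular, any vertex containing the symbol $i$ can be totally dominated only by a color class in which $i$ does not appear at all.

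With this reduction in place, I would turn to the hypothesis. Since $i$ appears in at least $k-1$ of the $k$ classes, at most one class avoids $i$ entirely. I would then focus on the $n-1$ vertices of the form $\{i,x\}$ with $x\neq i$ (these exist since $n\geq 5$) and ask which of them can be totally dominated. By the observation above each such vertex must be dominated by a class avoiding $i$, and there is at most one such class.

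The argument then splits into two cases. If \emph{no} class avoids $i$, then none of the vertices $\{i,x\}$ is totally dominated, so $f$ is not a TDC. If \emph{exactly one} class $V_t$ avoids $i$, I would exploit that color classes are nonempty: pick any vertex $\{p,q\}\in V_t$, noting $p,q\neq i$ because $V_t$ avoids $i$. The vertex $\{i,p\}$ shares the symbol $p$ with $\{p,q\}\in V_t$, so $\{i,p\}$ is not adjacent to every vertex of $V_t$; as $V_t$ is the only class avoiding $i$, the vertex $\{i,p\}$ is dominated by no class at all, and again $f$ fails to be a TDC.

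The reduction makes most of the work immediate, so I expect the only delicate point to be the single-class case: one must convert the bare nonemptiness of the unique $i$-avoiding class $V_t$ into an explicit vertex through $i$ that it fails to dominate. The counting of $i$-avoiding classes and the empty case then follow at once. (Note that the assumption $k\geq n-2$ is not actually used beyond placing us in the relevant regime; what the argument truly needs is only that the $k$ color classes are nonempty.)
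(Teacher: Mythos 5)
Your proof is correct and takes essentially the same route as the paper's: both arguments split according to whether zero or exactly one color class avoids the symbol $i$, and in the latter case both pick a vertex $\{p,q\}$ (the paper's $i'j'$) in the unique $i$-avoiding class and observe that the vertex $\{i,p\}$ (the paper's $ii'$) is then adjacent to all of no class. Your closing remark is also accurate: the paper's own proof never really uses the hypothesis $k\geq n-2$ either, only the nonemptiness of the color classes.
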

\begin{proof}{
Since for each $i\in\{1,2,...,n\}$ we have $|\{ij:~i\neq j,~1\leq j\leq n\}|=n-1$, the number of vertices in $KG(n,2)$ which contain the symbol  ''$i$" is $n-1$. 
Hence, each symbol can appear in at most  $n-1$ color classes. First assume that "$i$" appears in all of the color classes and hence, $k\leq n-1$. 
Let $j\in \{1,2,\ldots,n\}\setminus \{i\}$ and note that in each color class there exists (at least) one vertex which contains the symbol $i$ and hence, is not adjacent to the vertex $ij$. This means that the vertex $ij$ can not be adjacent to all of the vertices of a color class and thus $f$ is not a $TDC$.\\
Now, suppose that the symbol $i$ appears in exactly $k-1$ color classes and there exist a color class $V_j$ such that $i$ does not appear in it. Let $i'j'$ be a vertex in $V_j$. Note that in this case the vertex $ii'$ can not be adjacent to all of vertices of a color class which means that $f$ is not a TDC and this completes the proof.
}\end{proof}

In \cite{AB} all of optimal $(n-2)-$colorings of the Kneser graph $KG(n,2)$ are characterized as follows.
\begin{thm}\cite{AB}\label{AB}
In every proper $(n-2)-$coloring of the Kneser graph $KG(n,2)$, $n\geq 5$, there exists a unique triangular color class. Furthermore, if $c$ is a proper $(n-2)-$coloring of $KG(n,2)$, then by renaming the symbols $1,2,\ldots, n$, if it is necessary, we have the color classes $F_1, F_2,\ldots, F_{n-2}$ with the following properties.\\
(a) $F_{n-2}=\{n(n-1), n(n-2), (n-1)(n-2)\}$, i.e. $F_{n-2}$ is triangular;\\
(b) For each $i$, $1\leq i\leq n-3$, $F_i$ is starlike with center $i$, and $\{in, i(n-1), i(n-2)\}\subseteq F_i$;\\
(c) Each vertex $ij$ with $\{i,j\}\cap\{n,n-1,n-2\}=\emptyset$, is either in $F_i$ or $F_j$.
\end{thm}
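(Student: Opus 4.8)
The plan is to exploit the dictionary, already set up in the introduction, between proper colorings of $KG(n,2)$ and edge-decompositions of the complete graph $K_n$ into star and triangle subgraphs. Fix a proper $(n-2)$-coloring and regard it as such a decomposition; write $s$ for the number of starlike classes and $t$ for the number of triangular classes, so that $s+t=n-2$. Each starlike class carries a designated center (for a singleton class $\{ij\}$ we fix one of $i,j$ arbitrarily), and I will call a symbol a \emph{center} if it is the center of at least one starlike class, and a \emph{non-center} otherwise. Let $N\subseteq\{1,\ldots,n\}$ denote the set of non-center symbols.

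The heart of the argument is a pair of inequalities that pin down $t$. First, each of the $n-|N|$ center symbols carries at least one star, so $s\geq n-|N|$; combined with $s=n-2-t$ this gives $|N|\geq t+2$. Second, if $u,v\in N$ then the edge $uv$ cannot lie in any starlike class, since such a class would be centered at $u$ or at $v$, contradicting that both are non-centers; hence $uv$ lies in a triangular class. As a single triangle contains at most three edges internal to $N$, all $\binom{|N|}{2}$ internal edges of $N$ must be absorbed by the $t$ triangles, so $\binom{|N|}{2}\leq 3t$. Substituting $|N|\geq t+2$ yields $\binom{t+2}{2}\leq 3t$, that is $(t-1)(t-2)\leq 0$, whence $t\in\{1,2\}$.

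To establish uniqueness I would rule out $t=2$. Equality throughout the chain above would force $|N|=4$ and both triangles to lie entirely inside $N$ and to partition all six edges of the $K_4$ on $N$. But $K_4$ admits no decomposition into two edge-disjoint triangles: every vertex has odd degree $3$, whereas each triangle contributes an even amount to each vertex degree. This parity obstruction is the one genuinely non-routine step; it eliminates $t=2$ and leaves exactly one triangular class, proving the first assertion.

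With $t=1$ fixed, the structural description (a)--(c) falls out by tracing the equalities. Now $|N|\geq 3$ together with $\binom{|N|}{2}\leq 3$ forces $|N|=3$, and the single triangle must be exactly the triangle on $N$; renaming $N$ as $\{n,n-1,n-2\}$ gives (a). Then $s=n-3$ equals the number of distinct centers, so each remaining symbol $1,\ldots,n-3$ is the center of exactly one star $F_i$. For (b), each edge $in$, $i(n-1)$, $i(n-2)$ has its only center-endpoint at $i$ (the other endpoint lies in $N$) and is not the triangle edge, so it must sit in $F_i$. For (c), an edge $ij$ avoiding $\{n,n-1,n-2\}$ is not the triangle and lies in a star centered at one of its endpoints, hence in $F_i$ or $F_j$. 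I expect only the parity step to require real insight; the remainder is careful bookkeeping with the two counting inequalities.
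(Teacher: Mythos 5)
Your proof is correct, but one caveat about the comparison: this paper contains no proof of the statement at all --- it is imported as a known result from \cite{AB}, so there is no internal proof to measure you against. Judged on its own, your blind argument is a complete and valid derivation. Both counting inequalities are sound: each designated center carries at least one star, so the number of distinct centers $n-|N|$ is at most $s=n-2-t$, giving $|N|\geq t+2$; and an edge with both endpoints non-central cannot lie in a starlike class (its class, if starlike, would have its forced or designated center at one of the two endpoints), so all $\binom{|N|}{2}$ edges internal to $N$ land in triangles and $\binom{|N|}{2}\leq 3t$. Together these yield $(t-1)(t-2)\leq 0$, and your parity obstruction correctly eliminates $t=2$: with $t=2$ one gets $|N|=4$ and the two edge-disjoint triangles would have to decompose the $K_4$ on $N$, which is impossible (equivalently, any two distinct triangles on four vertices share an edge --- in the language the paper uses later, no $STS(4)$ exists). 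The bookkeeping for (a)--(c) is careful exactly where it needs to be, in particular in arguing that the class containing $in$ must be the star centered at $i$ rather than at $n\in N$, which correctly handles the singleton-class convention of arbitrarily designated centers. It is worth noting that your method is essentially the paper's own toolkit turned back on the imported theorem: Lemma \ref{1t5} bounds the number of triangular classes by distributing the $\binom{n-\ell}{2}$ vertices on non-central symbols among triangles in precisely this way, and the cases of the main theorem invoke the nonexistence of $STS(4)$ and $STS(6)$ just as you do. What your route buys is self-containedness: the appeal to \cite{AB} could be replaced by your short counting argument plus the $K_4$ parity step, and nothing in your argument even requires $n\geq 5$.
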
 

Now the following result is obtained.

\begin{corollary} \label{LowerBound}
For each integer $n\geq 5$, we have  $~n-1 \leq \chi_{_{td}}(KG(n,2))$.
\end{corollary}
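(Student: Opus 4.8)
The plan is to argue by contradiction, sharpening the lower bound $\chi_{_{td}}(KG(n,2))\geq n-2$ that is already available. First note that $KG(n,2)$ is isolated-free for $n\geq 5$, since every vertex $ij$ is disjoint from any $2$-subset of the remaining $n-2\geq 3$ symbols, so it has positive degree. Hence Proposition \ref{bound} combined with $\chi(KG(n,2))=n-2$ gives $\chi_{_{td}}(KG(n,2))\geq n-2$. It therefore suffices to rule out the extremal value $\chi_{_{td}}(KG(n,2))=n-2$; equivalently, to show that \emph{no} proper $(n-2)$-coloring of $KG(n,2)$ can be a TDC.

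So suppose, for contradiction, that $f$ is a proper $(n-2)$-coloring of $KG(n,2)$ that is also a TDC. Since $f$ uses exactly $\chi(KG(n,2))$ colors, it is an optimal coloring and thus falls under the structural classification of Theorem \ref{AB}. After renaming the symbols if necessary, I may assume $f=(F_1,\ldots,F_{n-2})$ where $F_{n-2}=\{n(n-1),n(n-2),(n-1)(n-2)\}$ is triangular and, for each $i$ with $1\leq i\leq n-3$, the class $F_i$ is starlike with center $i$ and satisfies $\{in,\,i(n-1),\,i(n-2)\}\subseteq F_i$.

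The crux is a counting observation about one of the three ``special'' symbols $n$, $n-1$, $n-2$. Tracking the symbol $n$: for each $i$ with $1\leq i\leq n-3$ the vertex $in$ lies in $F_i$, so $n$ occurs in each of $F_1,\ldots,F_{n-3}$; moreover $n$ occurs in $F_{n-2}$ since $n(n-1)\in F_{n-2}$. Thus the symbol $n$ appears in all $n-2$ color classes of $f$. Setting $k=n-2$, the hypothesis $k\geq n-2$ of Lemma \ref{ntdc} holds, and $n$ appears in all $k$ (hence in at least $k-1$) color classes, so Lemma \ref{ntdc} forces $f$ to fail to be a TDC, a contradiction. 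This yields $\chi_{_{td}}(KG(n,2))\geq n-1$.

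I expect no genuine obstacle here: once Theorem \ref{AB} pins down the shape of an optimal coloring, the argument reduces to the remark that each of the symbols $n$, $n-1$, $n-2$ is forced into every class, after which Lemma \ref{ntdc} does the work. The only point to verify separately is the boundary case $n=5$, where $k-1=2$; there the same count still places the symbol $n$ in all three classes, and, alternatively, $\gamma_t(KG(5,2))=4$ from Proposition \ref{gammat} already gives $\chi_{_{td}}(KG(5,2))\geq 4=n-1$ through Proposition \ref{bound}.
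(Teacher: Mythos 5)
Your proof is correct and follows essentially the same route as the paper: invoke Theorem \ref{AB} to find a symbol (you track $n$ explicitly via $in\in F_i$ and $n(n-1)\in F_{n-2}$) that appears in all $n-2$ color classes of any optimal proper coloring, then apply Lemma \ref{ntdc} with $k=n-2$ to conclude no such coloring is a TDC. Your added details --- verifying $\chi_{td}\geq n-2$ via Proposition \ref{bound}, checking the lemma's hypothesis $k\geq n-2$, and the $n=5$ boundary --- merely make explicit what the paper leaves implicit.
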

\begin{proof}{
By Theorem \ref{AB}, in each proper $(n-2)-$coloring of the Kneser graph $KG(n,2)$, there exists a symbol (namely $n$, $n-1$ or $n-2$ by renaming the symbols  if it is necessary) which appears in all of the color classes. Thus, by using Lemma \ref{ntdc} the result follows.
}\end{proof}

\begin{lemma}\label{triangle}
{\rm
Let $f$ be a proper $(n-1)-$coloring of the Kneser graph $KG(n,2)$ such that all of the color classes are starlike. Then, $f$ is not a $TDC$.
}
\end{lemma}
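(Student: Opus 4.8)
The plan is to show that the all-starlike hypothesis forces some symbol to occur in \emph{every} one of the $n-1$ color classes, after which Lemma~\ref{ntdc} finishes the job. Working in the complete-graph picture set up earlier, a proper coloring of $KG(n,2)$ whose classes are all starlike is an edge-decomposition of $K_n$ into $n-1$ stars, each star carrying a designated center symbol. Since there are only $n-1$ classes but $n$ symbols, a counting (pigeonhole) argument guarantees a symbol, say $a$, that is the center of no color class.

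Next I would track the $n-1$ edges of $K_n$ incident to $a$, that is, the vertices $a1, a2, \ldots$ of $KG(n,2)$ (all the $2$-sets containing $a$). Each such vertex lies in exactly one color class, and because $a$ is never a center, a starlike class containing $ai$ must have its center equal to the other endpoint $i$. The key step is to argue that these $n-1$ edges lie in \emph{pairwise distinct} classes: if $ai$ and $aj$ with $i\neq j$ belonged to the same starlike class, then the common center symbol of that class would have to lie in $ai\cap aj=\{a\}$, forcing $a$ to be a center, a contradiction. Hence the $n-1$ edges at $a$ meet $n-1$ different color classes, i.e. all of them, so the symbol $a$ appears in every color class.

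Finally, with $k=n-1\geq n-2$ and a symbol appearing in all $k$ classes (in particular in at least $k-1$ of them), Lemma~\ref{ntdc} immediately yields that $f$ is not a TDC. The only delicate point is the distinctness argument of the middle step, where one must rule out that two edges through $a$ hide inside one star; everything else is routine bookkeeping. Intuitively, decomposing $K_n$ into the minimum possible number $n-1$ of stars is forced to be ``centered'' enough that one symbol saturates all the classes, which is exactly the obstruction to being a TDC that Lemma~\ref{ntdc} captures.
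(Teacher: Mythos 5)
Your proposal is correct and takes essentially the same route as the paper's proof: both use pigeonhole ($n$ symbols versus $n-1$ starlike classes) to produce a non-central symbol, show that this symbol can meet each starlike class at most once and therefore appears in all $n-1$ classes, and then invoke Lemma~\ref{ntdc}. Your explicit distinctness argument for the edges through $a$ is just a rephrasing of the paper's observation that a non-central symbol appears at most once in each starlike color class.
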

\begin{proof}{
Since there are $n-1$ starlike color classes, there exist at most $n-1$ (distinct) central symbols. Hence, there exists a non-central symbol  $i\in \{1,2,...,n\}$. 
Therefor, the symbol $i$ appears at most once in each color class. Since the number of vertices containing the symbol $i$ is $n-1$,  this symbol should appear exactly once in each of the $n-1$ color classes. Now  Lemma \ref{ntdc} implies that $f$ is not a $TDC$. 
}\end{proof}

\begin{proposition} \label{UpperBound}
The total dominator chromatic number of the Petersen graph is $6$ and for each interger $n\geq 6$ we have $\chi_{_{td}}(KG(n,2))\leq n$. 
\end{proposition}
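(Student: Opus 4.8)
The plan is to work throughout with the following reformulation of the total domination condition, which makes everything transparent. Regarding a color class $C$ as a set of edges of $K_n$, let $S(C)$ be the set of symbols incident to the edges of $C$. A vertex $ij$ of $KG(n,2)$ is then adjacent to every edge of $C$ precisely when no edge of $C$ meets $\{i,j\}$, that is, when $S(C)\cap\{i,j\}=\emptyset$. Hence a proper coloring is a TDC if and only if for every pair $\{i,j\}$ there is a color class $C$ with $S(C)\subseteq\{1,\dots,n\}\setminus\{i,j\}$; equivalently, writing $\overline{S(C)}=\{1,\dots,n\}\setminus S(C)$, the sets $\overline{S(C)}$ must cover every pair of symbols. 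I would record this equivalence first, since both the constructions and the Petersen lower bound are phrased through it.

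For $\chi_{td}(KG(n,2))\le n$ when $n\ge 7$, I would start from an optimal $(n-2)$-coloring as in Theorem \ref{AB}, with triangular class $F_{n-2}$ supported on $\{n-2,n-1,n\}$, and then split the two low edges $\{1,2\}$ and $\{3,4\}$ off as two new singleton classes (by part (c) each lies in some $F_i$ with $i\le n-3$, and deleting it leaves that class independent). This produces exactly $n$ classes. To verify it is a TDC I check each pair $\{i,j\}$: if it avoids $\{1,2\}$ it is covered by $\overline{\{1,2\}}$, if it avoids $\{3,4\}$ by $\overline{\{3,4\}}$, and otherwise it meets both, forcing $\{i,j\}\subseteq\{1,2,3,4\}\subseteq\{1,\dots,n-3\}$ (this is exactly where $n\ge 7$ enters), so it is covered by the support of $F_{n-2}$. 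The case $n=6$ falls outside this argument, since only three low symbols are available; for it I would give one explicit $6$-coloring obtained from a cyclic $(6,3,2)$ covering design — the three triangles on $\{3,5,6\}$, $\{1,4,6\}$, $\{1,2,5\}$ together with the three $2$-edge stars $\{23,26\}$, $\{13,34\}$, $\{24,45\}$ — and check directly that these six supports partition the $15$ edges and that their co-supports cover all $15$ pairs.

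For the Petersen graph $KG(5,2)$ the upper bound $\le 6$ is again an explicit construction: the two singletons $\{12\},\{34\}$ together with the four $2$-edge stars $\{24,45\}$, $\{23,25\}$, $\{13,35\}$, $\{14,15\}$ give six classes that partition the ten edges, and one checks that their co-supports cover all ten pairs. The substance, which I regard as the main obstacle, is the matching lower bound $\chi_{td}(KG(5,2))\ge 6$. Here I would argue through the coverage reformulation: a class $C$ contributes a co-support of size $5-|S(C)|$, so only singletons (co-support of size $3$, covering $3$ pairs) and classes with $|S(C)|=3$ (co-support of size $2$, covering one pair) are useful, while larger stars cover nothing. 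The delicate point is that three singletons cannot cover $9$ distinct pairs: their three defining edges would have to be pairwise disjoint, and $K_5$ admits no matching of size $3$, so three singletons cover at most $8$ pairs. Combining this overlap bound with the edge–count identity $\sum_C|C|=10$, which caps how many singletons can appear in few classes (for five classes at most three singletons fit, and the remaining edges must sit in large, coverage–free stars), I would run a short case analysis on the number of singletons to conclude that every coloring with at most $5$ classes leaves some pair uncovered and hence is not a TDC. Together with the constructions this gives $\chi_{td}(KG(5,2))=6$ and $\chi_{td}(KG(n,2))\le n$ for all $n\ge 6$.
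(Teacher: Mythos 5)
Your proposal is correct, and it takes a genuinely different route from the paper on both halves of the statement. For the upper bound the paper simply exhibits an explicit partition (Table \ref{table1}, and for $n\geq 7$ the fixed model $V_1=\{12\}$, $V_2=\{34\}$, $V_3=\{25,26,56\}$, etc., plus the residual stars $V_i$ for $i\geq 7$) and verifies domination directly, noting that every vertex $ij$ with $i\geq 7$ is adjacent to $12$ or $34$. You instead derive an $n$-class TDC from the structure theorem (Theorem \ref{AB}) by splitting the vertices $12$ and $34$ off an optimal $(n-2)$-coloring as singleton classes, organized through your co-support covering reformulation; this is more structural and explains where such constructions come from, at the cost of requiring $n\geq 7$ (so that $\{1,2,3,4\}$ is disjoint from the triangle's support $\{n-2,n-1,n\}$) and hence a separate explicit coloring for $n=6$, which you correctly flag and supply --- I checked that your six classes partition the $15$ edges of $K_6$, are independent, and that their co-supports cover all $15$ pairs, and likewise for your Petersen coloring. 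The more substantial difference is the lower bound $\chi_{td}(KG(5,2))\geq 6$: the paper dismisses it with ``it is not hard to see by investigation,'' offering no argument at all, whereas you give a real one --- only classes with support of size $2$ or $3$ cover any pair ($3$ resp.\ $1$ pairs), three singletons cover at most $8$ pairs since $K_5$ has no $3$-matching, and the edge count $\sum_C|C|=10$ together with maximum class size $4$ in $K_5$ forbids four singletons among five classes; the case analysis on the number $s$ of singletons then closes in every case ($s\leq 2$ yields at most $3s+(5-s)\leq 9$ covered pairs; $s=3$ leaves $7$ edges in two classes, of which at most one can be a triangle, so coverage is at most $8+1=9<10$; $s\geq 4$ violates the edge count). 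One small imprecision: your phrase that the remaining edges must sit in ``large, coverage-free stars'' overlooks that one remaining class can be a triangle covering one pair, but the $8+1=9<10$ count in your own setup absorbs exactly this, so the argument is sound. In short, the paper's proof is shorter but partly by fiat; yours is complete where the paper hand-waves and structural where the paper is ad hoc.
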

\begin{proof}{
It is not hard to see by investigation that there does not exist a $TDC$ for the Petersen graph with 3,4 or 5 color classes and in Table \ref{table1} a $TDC$ of $KG(5,2)$ with 6 color classes is presented. Hence, $\chi_{_{td}}(KG(5,2))=6$. Also, it is easy to check that for each $n\in\{5,6,7,8,9\}$  the corresponding vertex partition of $KG(n,2)$ presented in Table \ref{table1} provides a total dominator coloring for  $KG(n,2)$. 
In fact the general model for color classes for each $n\geq 7$ can be stated as follows. 
Assume that $n\geq 7$. Let 
$$V_1=\{12\},~V_2=\{34\},~V_3=\{25,26,56\},~V_4=\{15,16\},~V_5=\{31,32,35,36\},~ V_6=\{41,42,45,46\}$$
and
$$V_i=\{ik:~k\neq i,~1\leq k \leq n\} \setminus \bigcup_{j=1}^{i-1}V_j~:~~7\leq i\leq n.$$
This partition of the vertices of $KG(n,2)$ provides a proper $n$-coloring for $KG(n,2)$ in which the color class $V_3$ is triangular and all other color classes are starlike.
Note that each vertex in $V_1\cup V_2\cup \cdots \cup V_6$ is adjacent to all of vertices of some $V_j$, $j\in \{1,2,...,6\}$. Also, for each $i\geq 7$ the vertex $ij$ is adjacent to at least one of the vertices $12$ or $34$ and this means that the vertex $ij$ is adjacent to all of vertices of $V_1$ or $V_2$. Therefor, this vertex partition provides a $TDC$ for $KG(n,2)$  and hence $\chi_{_{td}}(KG(n,2))\leq n$ for each  $n\geq 7$.
}\end{proof}

\begin{table}[ht] 
\begin{center}
\begin{tabular}{|cl|} \hline 
$KG(5,2)$ : & \{12\},\{34\},\{25\},\{15\},\{13,23,35\},\{14,24,45\}   \\ \hline 
$KG(6,2)$ : &  \{12\},\{34\},\{25,26,56\},\{15,16\},\{31,32,35,36\},\{41,42,45,46\}   \\ \hline 
$KG(7,2)$ : &  \{12\},\{34\},\{25,26,56\},\{15,16\},\{31,32,35,36\},\{41,42,45,46\}, \\ & \{71,72,73,74,75,76\}   \\ \hline 
$KG(8,2)$ : &  \{12\},\{34\},\{25,26,56\},\{15,16\},\{31,32,35,36\},\{41,42,45,46\}, \\ & \{71,72,73,74,75,76,78\},\{81,82,83,84,85,86\}  \\ \hline 
$KG(9,2)$ : &  \{12\},\{34\},\{25,26,56\},\{15,16\},\{31,32,35,36\},\{41,42,45,46\}, 
\\ & \{71,72,73,74,75,76,78,79\},\{81,82,83,84,85,86,89\},\{91,92,93,94,95,96\}  \\ \hline 
\end{tabular}
 \caption{Total dominator coloring for some Kneser graphs.}
 \label{table1}
 \end{center}
\end{table}

\begin{lemma} \label{1t5}
Assume that there exists a total dominator coloring of $KG(n,2)$ with $n-1$ color classes, $n\geq 6$, and let $t$ be the number of triangular color classes. 
Then we have $1\leq t\leq 5$. 
\end{lemma}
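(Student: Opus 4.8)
The plan is to treat the two inequalities separately: the lower bound $t\geq 1$ will follow instantly from a result already proved, while the upper bound $t\leq 5$ will come from a single counting inequality that compares the number of edges the starlike classes are \emph{forced} to contain with the number of edges they can \emph{possibly} contain. For the lower bound, recall that $f$ is a proper $(n-1)$-coloring of $KG(n,2)$ which is a TDC. If we had $t=0$, then every color class would be starlike, and Lemma \ref{triangle} asserts that such a coloring cannot be a TDC, a contradiction. Hence $t\geq 1$.

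For the upper bound I would exploit the dictionary, stressed in the introduction, between proper colorings of $KG(n,2)$ and edge-decompositions of $K_n$ into stars and triangles. Write $s=n-1-t$ for the number of starlike classes. First I would observe that the $t$ triangular classes use exactly $3t$ edges of $K_n$, so the starlike classes together must absorb the remaining $\binom{n}{2}-3t$ edges. On the other hand every edge lying in a starlike class is incident to that class's center, so \emph{all} edges contained in starlike classes are incident to the set $C$ of centers, where $|C|\leq s$. Since the number of edges of $K_n$ incident to a vertex set of size at most $s$ is at most $\binom{n}{2}-\binom{n-s}{2}$, I obtain the comparison $\binom{n}{2}-3t\leq \binom{n}{2}-\binom{n-s}{2}$, that is $3t\geq\binom{n-s}{2}$.

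The computation then closes itself: because $n-s=t+1$, the inequality becomes $3t\geq\binom{t+1}{2}=\tfrac{t(t+1)}{2}$, equivalently $t^2-5t\leq 0$, which forces $t\leq 5$; combined with $t\geq 1$ this is exactly the claim. The only points needing a word of care are (i) that the number of \emph{distinct} centers may be smaller than $s$, but this only shrinks the edge-capacity $\binom{n}{2}-\binom{n-s}{2}$, so the bound is only strengthened, and (ii) the identity $n-s=t+1$. I expect the genuine obstacle to be conceptual rather than computational: a naive edge count (bounding each star by at most $n-1$ edges) yields only $t\lesssim n/2$, and the symbol-appearance restriction coming from Lemma \ref{ntdc} yields lower rather than upper bounds on $t$. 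The decisive step is to bound the total number of star-edges not by the maximal star size but by the number of edges meeting the center set, since it is precisely $\binom{n-s}{2}=\binom{t+1}{2}$ that produces the constant $5$ uniformly in $n$.
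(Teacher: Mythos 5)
Your proof is correct and is essentially the paper's own argument: you obtain $t\geq 1$ from Lemma \ref{triangle} and the bound $3t\geq\binom{n-s}{2}=\binom{t+1}{2}$, which is exactly the paper's inequality $t\geq\frac{1}{3}\binom{n-\ell}{2}$ with $\ell=s$, merely phrased via complementary counting (edges incident to the center set) instead of directly counting the $\binom{n-\ell}{2}$ vertices with both symbols non-central that must lie in triangular classes. Your remark (i) about possibly fewer than $s$ distinct centers matches the paper's ``included in the $\ell$-set'' hedge, and your $t^2-5t\leq 0$ even avoids the paper's division step requiring $t\neq 0$.
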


\begin{proof}{
Lemma \ref{triangle} implies that $t\geq 1$. Let $\ell$ be the number of starlike color classes and hence, $$\ell=(n-1)-t\leq n-2.$$
 By renaming the symbols $1, 2, \ldots, n$, if it is necessary, we may assume that the centers of these $\ell$ starlike classes (which some of them may have similar centers) are included in the $\ell$-set  $\{n, n-1, \ldots, n-(\ell-1)\}$, and hence we can asuume that these starlike color classes are indexed as $V_n, V_{n-1}, \ldots, V_{n-\ell+1}$. Therefore,  $n-\ell$ symbols $1,2, \ldots, n-\ell$  are not the central symbol of any starlike color class. 
 The number of vertices of $KG(n,2)$ whose both symbols lie in the set $\{1,2,\ldots, n-\ell\}$ is equal to $n-\ell\choose 2$ and all of these vertices must be distributed among triangular color classes. Since each triangular color class contains exactly three vertives, for the distribution of these $n-\ell\choose 2$ vertices at least ${\frac{1}{3}}{{n-\ell} \choose 2}$ triangular color classes is needed. Thus, we must have
$$t\geq {\frac{1}{3}}{{n-\ell} \choose 2} = \frac{(n-\ell)(n-\ell-1)}{6}$$
and the relation  $t=(n-1)-\ell$ implies that $$(n-1)-\ell \geq \frac{(n-\ell)(n-\ell-1)}{6}.$$ 
Since $n-\ell-1\neq 0$, we conclude that $t\leq 5$. 
}\end{proof}

Recall that a balanced incomplete block design (a BIBD) with parameters $t,n,k,\lambda$ (i.e. a $t-(n,k,\lambda)$ design) is an ordered pair $(S,\beta)$ in which $S$ is a set of $n$ points (or symbols) and $\beta$ is a family of $k-$subsets of $S$ called blocks, such that every $t$ elements of $S$ occur together in exactly $\lambda$ blocks of $\beta$.  
When $\lambda=1$ the design is called a Steiner system, and when $k=3$ it is called a triple system. A design with parameters $t=2$, $k=3$ and $\lambda=1$ with $n$ points is called a Steiner triple system of order $n$, denoted by $STS(n)$.
It is well known that a Steiner triple system of order $n$ exists if and only if $n\equiv 1,3\pmod{6}$, see \cite{sts}. 
 
Now we are ready to state the main theorem as follows.


\begin{theorem}
For the total dominator chromatic number of the Kneser graphs $KG(n,2)$ we have 
\begin{equation*}
\chi_{_{td}} (KG(n,2))= \left\{
\begin{array}{ll}
6 & \mbox{ if } n=5,\\
n & \mbox{ if } n\geq 6.
\end{array}
\right.
\end{equation*}
\end{theorem}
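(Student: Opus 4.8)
The plan is to split the statement into its two asserted cases. For $n=5$ the value $\chi_{td}(KG(5,2))=6$ is exactly Proposition \ref{UpperBound} (recall that $KG(5,2)$ is the Petersen graph), so nothing is left to do there. For $n\ge 6$ the bound $\chi_{td}(KG(n,2))\le n$ is also supplied by Proposition \ref{UpperBound}, and since Corollary \ref{LowerBound} gives $\chi_{td}(KG(n,2))\ge n-1$, the entire problem reduces to excluding the value $n-1$. I would therefore assume, towards a contradiction, that $f=(V_1,\dots,V_{n-1})$ is a TDC of $KG(n,2)$ with exactly $n-1$ classes, let $t$ denote the number of triangular classes, and invoke Lemma \ref{1t5} to restrict to $1\le t\le 5$; the proof then proceeds by cases on $t$.

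The uniform set-up is to read $f$ as a decomposition of $E(K_n)$ into $\ell=n-1-t$ stars and $t$ triangles. After the renaming of Lemma \ref{1t5} there are at least $t+1$ symbols that are the centre of no star; call these the non-central symbols, and observe that every edge of $K_n$ joining two non-central symbols is incident to no star centre and hence must belong to a triangular class. Classifying each triangle by how many of its three symbols are non-central, it contributes $0$, $1$, or $3$ such ``internal'' edges, so if $m\ge t+1$ is the number of non-central symbols I obtain the packing inequality $\binom{m}{2}\le 3t$. Combined with $m\ge t+1$, this pins down $m$ in each case and dictates how the internal edges must sit inside the triangles.

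For $t\in\{3,4,5\}$ the packing inequality is tight enough to leave essentially no freedom: the pure triangles among the $V_i$ are forced to realise a triangle decomposition of $K_4$ (for $t=3$), of $K_5$ with a single edge deleted (for $t=4$), or of $K_6$ (for $t=5$). Each of these is impossible by the divisibility and parity conditions behind the existence theorem for Steiner triple systems --- concretely, $STS(4)$ and $STS(6)$ do not exist, and $K_5$ minus an edge has two vertices of odd degree --- so these values of $t$ cannot occur, even for a proper coloring of this shape. For $t\in\{1,2\}$ the internal edges do pack, so here I would instead use the TDC hypothesis through Lemma \ref{ntdc}. For a non-central symbol $s$ lying in $r_s$ triangles, the $n-1-2r_s$ edges at $s$ that avoid the triangles lie in pairwise distinct stars, so $s$ appears in exactly $(n-1-2r_s)+r_s=n-1-r_s$ color classes. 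To dodge the conclusion of Lemma \ref{ntdc} (whose threshold here is $(n-1)-1=n-2$) one would need $r_s\ge 2$ for every non-central $s$; summing over the at least $t+1$ non-central symbols gives $2(t+1)\le 3t$, which fails for $t=1$ and, for $t=2$, forces both triangles to equal $\{12,13,23\}$, impossible in a partition. Hence some non-central $s$ appears in at least $n-2$ classes, and Lemma \ref{ntdc} shows $f$ is not a TDC.

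In every case the assumption of an $(n-1)$-class TDC collapses, so $\chi_{td}(KG(n,2))\ge n$, and with the upper bound this yields $\chi_{td}(KG(n,2))=n$ for $n\ge 6$. I expect the main obstacle to be the middle step: first the careful verification that all edges between non-central symbols are trapped in triangular classes, and then, for $t\in\{3,4,5\}$, translating the resulting count into the exact triangle-decomposition whose non-existence is precisely a Steiner-triple-system fact. By comparison, the frequency identity that $s$ appears in $n-1-r_s$ classes, used for $t\in\{1,2\}$, is routine once the star/triangle decomposition picture is in place.
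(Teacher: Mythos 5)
Your proposal is correct, and it follows the paper's skeleton exactly at the top level --- the reduction via Proposition \ref{UpperBound} and Corollary \ref{LowerBound} to excluding an $(n-1)$-class TDC, the appeal to Lemma \ref{1t5} for $1\leq t\leq 5$, and a case analysis on $t$ ending in the nonexistence of $STS(4)$ and $STS(6)$ or in Lemma \ref{ntdc} --- but your execution of the cases $t\in\{2,3,4\}$ is genuinely different from, and cleaner than, the paper's. Where the paper argues by ad hoc subcases on how the ``mixed'' vertices $x,y$ (and $z$) sit inside the triangular classes (Subcases 2.1--2.2 and 3.1--3.3, each closed by a local counting or parity contradiction), you make two global counts do all the work: (i) since each triangular class carries exactly $0$, $1$ or $3$ edges internal to the non-central symbols, the exact cover of the $\binom{m}{2}$ internal edges pins the contribution pattern --- $(3,3,0)$ for $t=3$, $(3,3,3,1)$ for $t=4$, five pure triangles for $t=5$ --- reducing these cases to the nonexistence of a triangle decomposition of $K_4$, of $K_5$ minus an edge (odd degrees), and of $K_6$; and (ii) the frequency identity that a non-central symbol lying in $r_s$ triangles meets exactly $n-1-r_s$ color classes, so that escaping Lemma \ref{ntdc} forces $r_s\geq 2$ and hence $2(t+1)\leq 3t$, which kills $t=1$ outright and for $t=2$ forces both triangles to be $\{12,13,23\}$, impossible in a partition. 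I verified these counts: the contribution patterns you list are the only solutions of the exact-sum equations, the parity obstruction for $K_5-e$ is sound, and the equality analysis for $t=2$ correctly forces $m=3$ with all six triangle-symbol slots non-central. Your route buys several things: it handles the paper's $t=2$, $k=4$ branch uniformly (where the paper's text actually has a slip, citing $STS(6)$ where a $K_4$-decomposition, i.e.\ $STS(4)$, is meant); it replaces five subcases with two one-line counting arguments; and it makes explicit that $t\in\{3,4,5\}$ is already impossible for any proper $(n-1)$-coloring of this shape, the TDC hypothesis entering only through Lemmas \ref{1t5} and \ref{ntdc} --- exactly as in the paper, though the paper never says so. The paper's subcases, by contrast, are more concrete in exhibiting the offending vertices. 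One point you rightly flag and should keep explicit in a write-up: the claim that every edge between non-central symbols is trapped in a triangular class needs the paper's convention that a singleton starlike class has a designated central symbol, since otherwise such an edge could hide in a one-vertex class.
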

\begin{proof}{
By using Corolary \ref{LowerBound} and Proposition \ref{UpperBound},  $\chi_{_{td}}(KG(5,2))=6$ and  
for each $n\geq 6$ we have $n-1 \leq \chi_{_{td}}(KG(n,2)) \leq n$.
Here after we assume that $n\geq 6$ and we want to show that $\chi_{_{td}}(KG(n,2)) \neq n-1 $. 
Suppose on the contrary that there exists a total dominator coloring of $KG(n,2)$ with $n-1$ color classes.
Thus, by Lemma \ref{1t5} and by using the notations appeared in it and in its proof, we have $1\leq t \leq 5$. 
Let $C\subseteq \{1,2,...,n\}$ denotes the set of central symbols and let $\overline{C}=\{1,2,...,n\}\setminus C$. 
By considering the following cases based on different possibilities for $t$, we show that each of them leads to a contradiction, and  this  completes the proof.

\textbf{Case 1:}  $t=5$.\\
In this case we have  $\ell=n-1-t=n-6$ and hence $|C|\leq n-6$. Thus, there are at least $6$ non-central symbols, say $\{1,2,3,4,5,6\}$. 
Hence, $\{1,2,3,4,5,6\}\subseteq \overline{C}$.
Asume that $V_1, V_2, V_3, V_4, V_5$ are the triangular color classes. 
Note that when $i\neq j$ and $\{i,j\}\subseteq\overline{C}$ then non of the starlike color classes may contain the vertex $ij$.
Hence, we should have $$\{ij~|~i\neq j, \{i,j\}\subseteq\overline{C}\}\subseteq\bigcup_{k=1}^5 V_k.$$
Since $\{1,2,3,4,5,6\}\subseteq \overline{C}$   and  $$\bigg{|}\bigg{\{}ij~|~i\neq j , \{i,j\}\subseteq \{1,2,3,4,5,6\}~\bigg{\}}\bigg{|}={6\choose 2} =15=\sum_{k=1}^5 |V_k|,$$
all of $5$  triangular color classes must be constructed just based on the symbols $\{1,2,3,4,5,6\}$. 
For each $i\in \{1,2,3,4,5,6\}$ let $B_i$ be the set of appearing symbols in $V_i$. 
Since each triangular color class $V_i$ is constructed based on three symbols and its vertices are all of three possible pairings of these three symbols, 
we have $|B_i|=3$ for each $1\leq i\leq 6$. Also, note that each pair of distinct symbols (points) $i,j$ with the condition $\{i,j\}\subseteq \{1,2,3,4,5,6\}$, there exists
exactly one ($\lambda=1$) set (block) $B_k$, $1\leq k \leq 6$, such that $\{i,j\}\subseteq B_k$. 
This is equivalent to the existence of a $STS(6)$, which is impossible. 

\textbf{Case 2:}  $t=4$.\\
In this case  $\ell=n-1-t=n-5$ and hence $|C|\leq n-5$. This means that there are $k=|\overline{C}|\geq 5$ non-central symbols.
Assume that $V_1, V_2, V_3, V_4$ are the triangular color classes.
If $k\geq 6$, then $k\choose 2$ vertices $\{ij:~i\neq j,~\{i,j\}\subseteq \overline{C}\}$ should be distributed among these $4$ triangular color classes, 
which is impossible because 
$${k\choose 2}\geq {6\choose 2}=15>12=|V_1\cup V_2\cup V_3\cup V_4|.$$
Thus we must have exactly $k=5$ non-central vertices, say $\{1,2,3,4,5\}$ and hence $\overline{C}=\{1,2,3,4,5\}$. 
Since each $V_s$, $1\leq s \leq 4$, is an independent set in triangular form and 
$$\{ij~|~i\neq j, \{i,j\}\subseteq\overline{C}\} \subseteq\bigcup_{s=1}^4 V_s,~~|\{ij~|~i\neq j , \{i,j\}\subseteq\overline{C}\}|=10,~~\bigg{|}\bigcup_{s=1}^4 V_s\bigg{|}=12,$$ 
there exist two different vertices $x=ij$ and $y=i^{\prime} j^{\prime}$ in $\bigcup_{s=1}^4 V_s$    such that 
$$\{i,i^{\prime}\}\subseteq\overline{C},~~ \{j,j^{\prime}\}\cap \overline{C}=\emptyset.$$
Now we consider the following two subcases.
\\
\textbf{Subcase 2.1.} There exists $1\leq s\leq 4$ such that $\{x,y\}\subseteq V_s$.\\
Without loss of generality, assume that $V_s=\{x,y,12\}$ and $x=1j,~y=2j$ (note that $V_s$ is a triangular color class and hence $j=j^{\prime}$). 
"Three" vertices $13,14,15$ should be distributed among the three remaining triangular color classes. 
We know that  each triangular color class which contains a vertex containing the symbol $"1"$, should contain exactly two vertex containing the symbol $1$. 
Thus, an even number of vertices in $(V_1\cup V_2\cup V_3\cup V_4)\setminus V_s$ contain the symbol $1$, a contradiction.

\textbf{Subcase 2.2.} $x\in V_s$ and $y\in V_{s^{\prime}}$ with $s\neq s^{\prime}$ and $1\leq s, s^{\prime}\leq 4$.\\
In this case, $V_s$ should contain two vertices containing the symbol $j$ and $V_{s^{\prime}}$ should contain two vertices containing the symbol $j^{\prime}$. 
This means that
$$|V_1\cup V_2\cup V_3\cup V_4 \setminus \{ij~|~i\neq j, \{i,j\}\subseteq \overline{C}\}|=4$$
which is a contradiction.

\textbf{Case 3:}  $t=3$.\\
We have $\ell=n-1-t=n-4$ and $|C|\leq n-4$ which implies that there are $k\geq 4$ non-central symbols.
If $k\geq 5$, then $k\choose 2$ vertices  which both of their symbols are non-central, should be distributed among $3$ triangular color classes which is impossible since ${k\choose 2}\geq {5\choose 2}=10$ and there are $9$ vertices in these $3$ triangular color classes. 
Thus we  have exactly $4$ non-central symbols, say $\overline{C}=\{1,2,3,4\}$.
Let $V_1, V_2, V_3$ be these  triangular color classes. 
Since $$\{ij~|~i\neq j, \{i,j\}\subseteq\overline{C}\}\subseteq V_1\cup V_2\cup V_3,~~|\{ij~|~i\neq j , \{i,j\}\subseteq\overline{C}\}|={4\choose 2}=6,~~|V_1\cup V_2\cup V_3|=9,$$
there exist three different vertices $x=ij$, $y=i^{\prime} j^{\prime}$ and $z=i^{\prime\prime}j^{\prime\prime}$ in $V_1\cup V_2\cup V_3$    such that $$\{i,i^{\prime},i^{\prime\prime}\}\subseteq\overline{C},~~\overline{C}\cap \{j,j^{\prime},j^{\prime\prime}\}=\emptyset.$$
We consider three subcases as below.

\textbf{Subcase 3.1.} There exists $1\leq s\leq 3$ such that $\{x,y,z\}\subseteq V_s$.\\
Thus, two remaining triangular color classes must be constructed based on the symbols (points) $\{1,2,3,4\}$ which is impossible, because  $STS(4)$ does not exist. 

\textbf{Subcase 3.2.} There exist $1\leq s,s'\leq 3$ such that $\{x,y\}\subseteq V_s$ and $z\in V_{t'}$.\\
Then, $V_t$ should contain two vertices containing the symbol $j^{\prime\prime}$. This means that
$$| V_1\cup V_2\cup V_3\setminus \{ij~|~i\neq j, \{i,j\}\subseteq \overline{C}\}|\geq 4,$$
which is a contradiction.
 
\textbf{Subcase 3.3.} $x\in V_1$, $y\in V_2$ and $z\in V_3$.\\
Similarly,  $V_1$ should contain two vertices containing the symbol $j$, $V_2$ should contain two vertices containing the symbol $j^{\prime}$ and $V_3$ should contain two vertices containing the symbol $j^{\prime\prime}$. This means that
$$|(V_1\cup V_2\cup V_3\setminus \{ij~|~i\neq j, \{i,j\}\subseteq \overline{C}\}|\geq 6,$$
which is a contradiction.

\textbf{Case 4:}  $t=2$.\\
In this case  $\ell=n-1-t=n-3$ and $|C|\leq n-3$ which implies that $k=|\overline{C}|\geq 3$.
First assume that $k=4$. Since two triangular color classes contain $6$ vertices together and  ${4\choose 2}=6$,  this leads the existence of a $STS(6)$, which is impossible.
If $k\geq 5$, then $k\choose 2$ vertices whose both symbols are non-central, should be distributed among these two triangular color classes which is impossible because ${k\choose 2}\geq {5\choose 2}=10>6$.
Thus, we  have exactly $k=3$ non-central symbols, say $\{1,2,3\}$.
Hence, three vertice $12,13,23$ should be distributed among two triangular color classes (say, $V_1$ and $V_2$) and 
by the Pigenhole principle, for some $i\in\{1,2\}$ we have $|V_i\cap \{12,13,23\}| \geq 2$ and this implies that $V_i=\{12,13,23\}$.
Without loss of generality, we may assume that $V_1=\{12,13,23\}$ and $V_2=\{ab,ac,bc\}$. 
Since $V_1\cap V_2=\emptyset$, we have $|\{1,2,3\}\cap\{a,b,c\}|\leq 1$.  Without loss of generality, assume that $1\notin \{a,b,c\}$. 
Since there are $\ell=n-3$ starlike color classes  with exactly $n-3$ central symbols $\{4,5,...,n\}$, for each $j\in \{4,5,...,n\}$ there exists a unique starlike color class with central symbol $j$ which contains the vertex $1j$. Hence, the symbol "$1$" appears in exactly $n-2$ color classes which contradicts Lemma \ref{ntdc}. 

\textbf{Case 5:} Let $t=1$.\\
Let $V_1$ be the triangular color classes. Since $\ell=n-2$ and $|C|\leq n-2$, there are $k\geq 2$ non-central symbols. Assume that  $\{1,2\}\subseteq \overline{C}$ and hence $V_1$ contains the vertex $12$. Without lose of generality, assume that $V_1=\{12,13,23\}$.  
Let $i\in\{4,5,...,n\}$. Since the number of vertices containing the symbol "$i$" is $n-1$ and there are $n-2$ (starlike) color classes which may contain a vertex with symbol $i$, by the Pigenhole principle there exists a starlike color class with at least two vertices containing the symbol $i$, i.e a starlike with central symbol $i$. 
Hence, $\{4,5,...,n\}\subseteq C$ and for each $i\in\{4,5,...,n\}$ the vertex $1i$ should be in a starlike color class with central symbol $i$. Therefore, the symbol $1$ appears in at least $n-2$ color classes and this contradicts Lemma \ref{ntdc}.
}\end{proof}
~\\ \\ 
The authors declare that they have no competing interests.



\end{document}